\newcommand{\R}{{\mathbb R}}
\newcommand{\N}{{\mathbb N}}
\newcommand{\be}[1]{\begin{equation}\label{#1}}
\newcommand{\ee}{\end{equation}}
\renewcommand{\(}{\left(}
\renewcommand{\)}{\right)}
\newcommand{\irn}[1]{\int_{\R^n}{#1}\,dx}
\newcommand{\nrm}[2]{\|{#1}\|_{L^{#2}(\R^n)}}
\newcommand{\nrmsd}[2]{\|{#1}\|_{L^{#2}(\mathbb S^n)}}
\DeclareSymbolFont{fouriersymbols}{FMS}{futm}{m}{n}
\DeclareSymbolFont{fourierlargesymbols}{FMX}{futm}{m}{n}
\DeclareMathDelimiter{\VERT}{\mathord}{fouriersymbols}{152}{fourierlargesymbols}{147}
\newcommand{\nnrm}[1]{\VERT{#1}\VERT}
\newtheorem{thm}{Theorem}[section]
\newtheorem{prop}[thm]{Proposition}
\theoremstyle{definition}
\numberwithin{equation}{section}
\def\@makechapterhead#1{\vspace*{50\p@}{\parindent \z@ \raggedright \normalfont\interlinepenalty\@M\Large \bfseries #1\par\nobreak\vskip 40\p@}}
\def\@makeschapterhead#1{\vspace*{50\p@}{\parindent \z@ \raggedright\normalfont\interlinepenalty\@M\Large \bfseries #1\par\nobreak\vskip 40\p@}}
\renewcommand{\thesection}{\@arabic\c@section}
\renewcommand{\thesubsection}{\@arabic\c@section.\@arabic\c@subsection}
\newcounter{taggedeq}
\pretocmd{\equation}{\stepcounter{taggedeq}}{}{}
\begin{document}
\mainmatter
\title{Hardy-Littlewood-Sobolev and related inequalities: stability}
\titlemark{Hardy-Littlewood-Sobolev inequalities: stability}

\emsauthor{1}{J.~Dolbeault, M.~J.~Esteban}{J.~Dolbeault, M.~J.~Esteban}
\emsaffil{1}{CEREMADE (CNRS UMR n$^\circ$ 7534), PSL university, Universit\'e Paris-Dauphine, Place de Lattre de Tassigny, 75775 Paris 16, France \email{dolbeaul@ceremade.dauphine.fr, esteban@ceremade.dauphine.fr}}

\classification[26D10; 49K30; 46E35; 35J20; 49K20]{49J40}
\keywords{Sobolev inequality, Hardy-Littlewood-Sobolev inequality, Gagliardo-Nirenberg-Sobolev inequalities, Bianchi-Egnell method, concentration-compactness method, spectral gap, Legendre duality, stability, fast diffusion equation}

\begin{abstract}
The purpose of this chapter is twofold. We present a review of the existing stability results for Sobolev, Hardy-Littlewood-Sobolev (HLS) and related inequalities. We also contribute to the topic with some observations on constructive stability estimates for~(HLS).
\end{abstract}

\makecontribtitle

\emph{It is with great pleasure that we dedicate this paper to Elliott Lieb on the occasion of his 90th birthday.}

\section{A short review of some functional inequalities}\label{Sec:Introduction-old}

Functional inequalities play a very important role in various fields of mathematics, ranging from geometry, analysis, and probability theory to mathematical physics. For many problems, the precise value of the best constants matters and was actively studied, often in relation with the explicit knowledge of the optimizers. A standard scheme goes as follows: by rearrangement and symmetrisation, optimality is reduced to a smaller class of functions, for instance, to radial functions. After proving that the equality case is achieved, the Euler-Lagrange equations are solved by ODE techniques, which allows to classify the optimal functions and compute the best constants. This is the strategy of E.H.~Lieb in~\cite{Lieb-1983} for the \emph{Hardy-Littlewood-Sobolev inequality} which can be written as
\be{HLSgen}
\iint_{\R^n\times\R^n}f(x)\,|x-y|^{-\lambda} g(y)\,dx\,dy\le C_{p,\lambda,n}\,\|f\|_{L^p(\R^n)}\,\|g\|_{L^t(\R^n)}\,,\tag{$\mathrm{HLS}_\lambda$}
\ee
for all $f\in L^p(\R^n)$, $g\in L^t(\R^n)$, with $1<p$, $t<+\infty$ such that $1/p+1/t+\lambda/n=2$ and $0<\lambda<n$. By duality he also obtained a new and simple proof of the \emph{fractional Sobolev inequality}, which goes as follows. If $\alpha\in(0,n/2)$ and $q=2\,n/(n-2\alpha)$, then
\be{Sobfrac}
\nrm{(-\Delta)^{\alpha/2}f}2^2\ge S_{n,\alpha}\,\nrm fq^2\tag{$\mathrm S_\alpha$}
\ee
for any smooth and compactly supported function $f$. Refer to~\cite[Chapter~5]{MR1817225} for considerations on the domain $\mathscr D_\alpha$ of~$(-\Delta)^{\alpha/2}$ based on Fourier transforms: it is the space of all tempered distributions $u$ which vanishes at infinity and such that~$(\kern-0.6pt-\Delta)^{\alpha/2}u$ is in $L^2(\R^n)$. According to~\cite[5.9(1) and 5.10(2)]{MR1817225}, we have
\[
\|(-\Delta)^{-\alpha/2}f\|_{L^2(\R^n)}^2=\frac{c_{n-2\alpha}}{c_{2\alpha}}\iint_{\R^n\times\R^n}\frac{f(x)\,f(y)}{|x-y|^{n-2\alpha}}\,dx\,dy\quad\mbox{with}\quad c_\alpha=\Gamma(\alpha)/\pi^{\alpha/2}
\]
for any $f\in L^p(\R^n)$, so that
\[
\iint_{\R^n\times\R^n}\frac{f(x)\,f(y)}{|x-y|^{n-2\alpha}}\,dx\,dy=\frac{c_{2\alpha}}{c_{n-2\alpha}}\irn{f\,(-\Delta)^{-\alpha}f}\,.
\]
Taking the Legendre transform of both sides of~\eqref{HLSgen} written for $p=2\,n/(n+2\alpha)$ and $\lambda=n-2\alpha$, we realize that inequalities ~\eqref{HLSgen} and~\eqref{Sobfrac} are equivalent because $q=2\,n/(n-2\alpha)$ is the H\"older conjugate of $p$. Moreover, the optimal constants are such that
\[
S_{n,\alpha}=\frac{c_{2\alpha}}{c_{n-2\alpha}}\,\frac1{C_{p,\lambda,n}}\quad\mbox{with}\quad\lambda=n-2\alpha\quad\mbox{and}\quad p=\frac{2\,n}{n+2\alpha}\,.
\]
See~\cite{Lieb-1983} and~\cite{Carlen-2017} for further details on duality issues. Both~\eqref{Sobfrac} and~\eqref{HLSgen} inequalities are invariant by scaling and conformally invariant.

The classical case of \emph{Sobolev's inequality} corresponds to~\eqref{Sobfrac} with $\alpha=1$. If $n\ge 3$, the inequality can be written with $S_n=S_{n,1}$ as
\be{Sob}
\nrm{\nabla f}2^2\ge S_n\,\nrm f{\frac{2n}{n-2}}^2\tag{S}
\ee
for all functions $f\in L^\frac{2n}{n-2}(\R^n)$ such that $\nabla f\in L^2(\R^n)$. Inequality~\eqref{Sob} has a long history. An early computation of the best constant and the optimizers for the Sobolev inequality among radial functions can be found in~\cite{Bliss-1930} (also see~\cite{Rosen-1971} for the linear stability of the radial optimizers). Without symmetry, the inequality is proved in~\cite{zbMATH03035784,zbMATH03212366} and the computation of the constant and the optimal functions is classically attributed to T.~Aubin~\cite{Aubin-1976} and G.~Talenti~\cite{Talenti-1976}, even if it seems that a first complete proof goes back to E.~Rodemich's seminar~\cite{Rod66} (see~\cite[p.~158]{MR1814364} for a quote). The value of the best constant $S_n$ was found to be
\[
S_n=\pi\,n\,(n-2)\(\frac{\Gamma(n/2)}{\Gamma(n)}\)^{2/n}
\]
and equality is achieved in~\eqref{Sob} if and only if $f$ is in the $(d+2)$-dimensional manifold~$\mathscr M_{\mathrm S}$ of the \emph{Aubin-Talenti functions}
\[
h_{\mu,x_0,\sigma}(x):=\mu\(1+\sigma\,|x-x_0|^2\)^{-(n-2)/2}\quad\forall\,x\in\R^d
\]
parametrised by $(\mu,x_0,\sigma)\in\R\times\R^d\times(0,+\infty)$. In terms of duality, it is straightforward to check that the~\eqref{HLSgen} inequality corresponding to~\eqref{Sob} is given by the particular choice $\lambda=n-2$ and $p=t$ of the parameters, and can be written as
\be{HLSsp}
\int_{\R^n}f\,(-\Delta)^{-1}f\,dx\le \mathcal C_n\,\|f\|_{L^\frac{2n}{n+2}(\R^n)}^2
\ee
with
\[
\mathcal C_n=\frac1{S_n}=\frac1{\pi\,n\,(n-2)}\(\frac{\Gamma(n)}{\Gamma(n/2)}\)^\frac2n\,.\tag{HLS}
\]
This case is important as it encodes the $L^p$ smoothing properties of $(-\Delta)^{-1}$ on~$\R^n$.

Inequality~\eqref{HLSgen} was introduced by G.H.~Hardy and J.E.~Littlewood~\cite{HL-1928,HL-1930,Hardy_1932} on $\R$ and generalised by S.L.~Sobolev~\cite{zbMATH03035784,zbMATH03212366} to $\R^N$. The proof of the sharp~\eqref{HLSgen} inequality is due to E.H.~Lieb in~\cite{Lieb-1983} with a more detailed proof in~\cite{MR1817225}. This proof uses rearrangement techniques. The first proof of existence of an optimal function without symmetrisation is due to P.-L.~Lions in~\cite[Theorem~2.1]{Lions_1985b} and relies on the concentration-compactness method. This proof is useful for generalizations involving non-symmetric convolution kernels, but does not allow to identify the optimal functions. Also see~\cite[Section~II.4]{MR1078018} for the application to sharp Sobolev inequalities. The concentration-compactness method is currently used, see for instance~\cite{Palatucci_2013,Bonder_2018,Zhang_2021,de_Pablo_2022}, although most papers on the topic focus on the fractional Sobolev inequality rather than on the corresponding~\eqref{HLSgen} inequalities. A new rearrangement-free proof was provided by R.L.~Frank and E.H.~Lieb in~\cite{MR2925386,MR2848628,MR2858468}.

For the sake of simplicity, we restrict this presentation to few results and give sketches of the proofs only in the case of~\eqref{Sob} and~\eqref{HLSsp}. Without pretending to any exhaustivity, we also list some results for~\eqref{Sobfrac},~\eqref{HLSgen} and some related inequalities.

\emph{Gagliardo-Nirenberg-Sobolev inequalities} refer to the two famous papers~\cite{MR102740,MR109940}. Within this class of inequalities, the inequality
\be{GNSineq}
\nrm uq\le C\,\nrm up^{1-\theta}\,\nrm{\nabla u}2^\theta\tag{GNS}
\ee
holds for all smooth compactly supported functions, and by density, for all functions $u\in L^p(\R^n)$ such that $\nabla u\in L^2(\R^n)$. We shall assume here that the parameters $p$ and $q$ are such that \hbox{$2\le p<q<2^*$} where $2^*=+\infty$ if $n=1$ or $2$, and $2^*=2\,n/(n-2)$ if $n\ge3$. The exponent
\[
\theta=\tfrac{2\,n\,(q-p)}{q\,(2\,d-p\,(d-2))}
\]
is uniquely determined by the scaling properties of the inequalities. Optimality in inequality~\eqref{GNSineq} is achieved among radial functions but there are only few cases for which the best constants and the minimizers are known, for instance if $q=2\,(p-1)$ (see~\cite{Gunson91,DD-2002,CE-N-V-2004}) or if $d=1$.

Optimality results for the \emph{logarithmic Sobolev} (log-Sobolev) inequality, the \emph{logarithmic Hardy-Little\-wood-Sobolev} (log-HLS) inequality, the \emph{Onofri} inequality or the \emph{Caffarelli-Kohn-Nirenberg} inequalities, among many other functional inequalities, have attracted a lot of attention over the years and we may refer for instance to~\cite{Gross-1975,Carlen-Loss-1992,DEL-2016} for some key papers and to~\cite[Chapter~1, bibliographical comments]{BDNS} for a short review. Functional inequalities in bounded domains or on manifolds will not be discussed here, except for a few results on the sphere $\mathbb S^n$ which are related to inequalities on $\R^n$ by the stereographic projection.

Once optimal constants are known and the set of optimising functions has been characterised, the next question is to understand \emph{stability}: which kind of distance is measured by the \emph{deficit}, that is, the difference of the two terms in the functional inequality, written with the optimal constant. A variety of answers has been obtained during the last 30 years and this is what we are now going to review, mostly in the case of~\eqref{Sob} and~\eqref{HLSsp}.

\section{Quantitative stability results}\label{Sec:nonconstructive-stab-Sobolev}

In the celebrated paper~\cite{BL-1985}, H.~Brezis and E.H.~Lieb raise the question of the quantitative stability for Sobolev inequalities and gave an answer in bounded domains. A related result is proved by H.~Egnell, F.~Pacella, and M.~Tricarico in~\cite{EPT-1989}. In the case of the Sobolev inequality in the whole Euclidean space, the first quantitative stability result is obtained by G.~Bianchi and H.~Egnell in~\cite{Bianchi-Egnell-1991}. They prove that
\be{BE}
\nrm{\nabla f}2^2- S_n\,\nrm f{\frac{2n}{n-2}}^2\ge\kappa_{\mathrm S}\inf_{h\in\mathscr M_{\mathrm S}}\nrm{\nabla f-\nabla h}2^2
\ee
for some positive constant $\kappa_{\mathrm S}$ whose value is not known, as they argue by compactness using the concentration-compactness method and by contradiction. Stability results are of course not limited to~\eqref{Sob}. In recent years, the problem of proving stability for various sharp inequalities related to~\eqref{Sob} in analysis and geometry, such as the isoperimetric inequality, the Brunn-Minkowski inequality, the logarithmic Sobolev inequality, \emph{etc.}, has been widely studied: see for instance~\cite{FMP10,FMP13,FI13,FJ17}. The stability of the Sobolev inequality in the case of an $L^q$ norm of the gradient with $q\neq 2$ is proved by A.~Cianchi, N.~Fusco, F.~Maggi and A.~Pratelli in~\cite{CFMP09} using mass transportation, and improved recently by A.~Figalli, R.~Neumayer and Y.~R.-Y.~Zhang in~\cite{FN18,Neu20,FZ20}.

The proof in~\cite{Bianchi-Egnell-1991} uses the Hilbertian nature of $H^1(\R^d)$. When the Hilbertian nature of the underlying spaces is lost, as this is the case for the Gagliardo-Nirenberg-Sobolev inequalities, the same argument cannot be used. To circumvent this difficulty two approaches are used for~\eqref{GNSineq}. The first approach relies on a trick due to D.~Bakry, that can be found in~\cite{BGL-2014} (see a detailed computation in~\cite[Section~1.3.1.2]{BDNS}): the~\eqref{GNSineq} inequality with $q=2\,(p-1)$, whose optimal functions are generated by
\be{fp}
\mathsf u_q(x):=\big(1+|x|^2\big)^{-1/(q-1)}\quad\forall\,x\in\R^d\,,
\ee
is rewritten as a Sobolev inequality in a higher, artificial, dimension such that the optimal functions appear as Aubin-Talenti functions. This idea is used first by E.~Carlen and A.~Figalli in~\cite{Carlen-Figalli-2013} and later expanded by F.~Seuffert in~\cite{Seu17} and V.H.~Nguyen in~\cite{Ngu19}. Constants are anyway non-constructive as they rely on~\cite{Bianchi-Egnell-1991}. Starting with~\cite{Struwe-1984}, another approach (which will not be reviewed here) has been developed for studying the stability of the critical points of~\eqref{Sobfrac} in $H^{-1}$: see~\cite{aryan2021stability,Ciraolo_2017,deng2021sharp,Figalli_2020,MR4348297, Wei-Wu}. Results are again non-constructive and we are not aware of any counterpart in the case of~\eqref{HLSgen}.

There is a different line of thought which relies on entropy methods: see~\cite[Chapter~1]{BDNS} for results based on the calculus of variations. Notice here that stability is measured in terms of a relative Fisher information, which is not the same notion of distance as in~\eqref{BE} but via the Pinsker-Csisz\'ar-Kullback inequality it controls a distance equivalent to $\nrm{|f|^p-|h|^p}1$. Up to minor restrictions, these estimates can be made constructive and will be listed in the next section.

The Bianchi-Egnell method is not limited to the case $\alpha=1$ of~\eqref{Sobfrac}. In~\cite[Theorem~1]{MR3179693}
S.~Chen, R.L.~Frank and T.~Weth prove a stability result for~\eqref{Sobfrac}: for all~\hbox{$f\in\mathscr D_\alpha\subset L^q(\R^n)$} with $q=2n/(n-2\alpha)$ and $\alpha\in(0,n/2)$,
\be{CFW}
\nrm{(-\Delta)^{\alpha/2}f}2^2-S_{n,\alpha}\,\nrm fq^2\tag{$\mathrm S_\alpha$}\ge\kappa_{\mathrm S,\alpha} \inf_{h\in\mathscr M_{\mathrm S,\alpha}}\,\nrm{(-\Delta)^{\alpha/2}(f-h)}2^2
\ee
where the manifold $\mathscr M_{\mathrm S,\alpha}$ of optimal functions is generated from
\[
h(x)=\big(1+|x|^2\big)^\frac{2\alpha-n}2
\]
by multiplications by a constant, translations and scalings. The computation of the spectrum of the linearized problem uses a reformulation on the sphere which will be illustrated in Section~\ref{Sec:local-stab-HLS}. The dual counterpart for~\eqref{HLSgen} inequalities is~\cite[Theorem~1.5]{Carlen-2017} due to E.~Carlen, which goes as follows.
\begin{thm}\label{HLSCarlen} Let $n\ge2$. For all $\alpha\in(0,n/2)$, there is a constant $\kappa_{{\mathrm{HLS}},\alpha}>0$ depending only on $n$ and $\alpha$ such that for all $f\in L^{2n/(n+2\alpha)}(\R^n)$,
\be{HLSB3}
\nrm f{\frac{2\,n}{n+2\alpha}}^2-S_{n,\alpha}\,\nrm{(-\Delta)^{-\alpha/2}f}2^2\ge\kappa_{{\mathrm{HLS}},\alpha}\inf_{h\in\mathscr M_{{\mathrm{HLS}},\alpha}}\nrm{f-h}{\frac{2\,n}{n+2\alpha}}^2\,.
\ee
\end{thm}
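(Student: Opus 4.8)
The plan is to deduce the dual stability statement~\eqref{HLSB3} directly from the stability estimate~\eqref{CFW} for the fractional Sobolev inequality, exploiting the Legendre duality between~\eqref{Sobfrac} and~\eqref{HLSgen} already recalled in the introduction. Since equality in~\eqref{Sobfrac} and in~\eqref{HLSgen} is attained on dual families of optimizers, one expects the deficit of one inequality to control the deficit of the other, and the Bianchi-Egnell-type remainder term to transfer accordingly.

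First I would set $q=2n/(n-2\alpha)$ and its conjugate $p=2n/(n+2\alpha)$, and introduce the (bounded, self-adjoint, positive) operator $T=(-\Delta)^{-\alpha/2}$, so that~\eqref{CFW} reads $\|Tg\|_{L^q(\R^n)}^{-2}\big(\nrm{g}{2}^2 - S_{n,\alpha}\,\|Tg\|_{L^q}^2\big)\ge\dots$ after the change of variable matching the Legendre transform; the key identity is that the map $f\mapsto g$ realizing the duality sends the optimizer manifold $\mathscr M_{\mathrm{HLS},\alpha}$ onto $\mathscr M_{\mathrm{S},\alpha}$. Concretely, I would write the Legendre transform of the functional whose stability is asserted in~\eqref{CFW} and identify its sub-level sets with those of the functional in~\eqref{HLSB3}. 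The second step is to show that a lower bound on the Sobolev deficit of the form $\kappa_{\mathrm S,\alpha}\,\mathrm{dist}^2$ is inherited, after dualization, by the HLS deficit: this uses that the two deficits vanish simultaneously and to the same order near the common manifold of optimizers, which is exactly the regime where the linearized (spectral gap) information governs the quantitative behaviour.

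The main obstacle, and the place where care is needed, is that Legendre duality is a global and nonlinear correspondence, so a clean quadratic remainder on one side does not automatically produce a quadratic remainder on the other without a compactness or local-to-global argument. Concretely, duality controls the deficits well only \emph{near} the optimizer manifolds, where the two functionals are locally strictly convex/concave in the transverse directions; far from the manifolds one must rule out degeneration. I therefore expect the proof to combine a local analysis near $\mathscr M_{\mathrm{HLS},\alpha}$ (where the transferred spectral gap from~\eqref{CFW} gives the quadratic lower bound explicitly) with a compactness-and-contradiction argument à la Bianchi-Egnell to handle the region far from the optimizers, as in~\cite{Bianchi-Egnell-1991}. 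This is precisely why the constant $\kappa_{\mathrm{HLS},\alpha}$ in the statement is asserted to exist and to depend only on $n$ and $\alpha$, but is not given explicitly.

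Finally I would verify the two technical points that make the duality transfer rigorous: that the infimum over $h\in\mathscr M_{\mathrm{HLS},\alpha}$ in~\eqref{HLSB3} is attained (so that the distance is well defined and the contradiction argument can extract a limiting near-optimizer), and that the dual map is a homeomorphism between neighbourhoods of the two manifolds in the respective norms $L^{2n/(n+2\alpha)}$ and $\dot H^\alpha$. Granting these, the estimate~\eqref{HLSB3} follows with $\kappa_{\mathrm{HLS},\alpha}$ obtained from $\kappa_{\mathrm S,\alpha}$ together with the constants produced by the compactness step.
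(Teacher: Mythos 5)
You should first be aware that the paper does not actually prove Theorem~\ref{HLSCarlen}: it is quoted from \cite[Theorem~1.5]{Carlen-2017}, and the surrounding text only sketches the mechanism of Carlen's proof. Your starting point --- transferring the stability estimate~\eqref{CFW} of Chen, Frank and Weth to the dual side via the Legendre correspondence between~\eqref{Sobfrac} and~\eqref{HLSgen} --- is the same as Carlen's, and you correctly identify the genuine difficulty, namely that the duality is nonlinear, so a quadratic remainder does not pass to the dual functional for free. Where you diverge is in how this is resolved. You propose to patch the duality with a local analysis near $\mathscr M_{{\mathrm{HLS}},\alpha}$ plus a compactness-and-contradiction argument \emph{\`a la} \cite{Bianchi-Egnell-1991} far from the manifold. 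That route is viable in principle (the far-field step reduces, via concentration-compactness, to the fact that minimizing sequences for~\eqref{HLSgen} converge to $\mathscr M_{{\mathrm{HLS}},\alpha}$ modulo symmetries), but once you invoke it the duality becomes essentially decorative: you are then just running Bianchi--Egnell directly on the HLS functional. Carlen's argument needs no additional compactness step. The ingredient missing from your proposal is a \emph{quantitative convexity} estimate --- the $2$-uniform convexity of $L^p(\R^n)$ for $p=2\,n/(n+2\alpha)<2$ --- which controls $\nrm{f-h}p^2$ by the defect in the convexity inequality underlying the dualization and thereby converts~\eqref{CFW} \emph{globally} into~\eqref{HLSB3}. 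This is exactly what the paper means by ``the interplay between stability bounds and quantitative convexity estimates is crucial.'' A consequence you get wrong is the status of the constant: $\kappa_{{\mathrm{HLS}},\alpha}$ \emph{is} explicitly computable in terms of the Sobolev stability constant, as the paper states; it is unknown only because that constant, produced by the compactness argument of \cite{MR3179693}, is itself unknown --- not because of a second compactness step hidden in the dualization.
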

Here $\mathscr M_{{\mathrm{HLS}},\alpha}$ is the manifold of all optimal functions, which is generated from 
\[
h(x)=\big(1+|x|^2\big)^{-\frac{n+2\alpha}2}
\]
by multiplication by a constant, translations and scalings. Using a duality argument to relate~\eqref{BE} with~\eqref{HLSB3} is not as straightforward as proving the equivalence of~\eqref{Sobfrac} with~\eqref{HLSgen} when $p=t$ and $\lambda=n-2\alpha$. The interplay between stability bounds and quantitative convexity estimates is crucial. This is analysed in~\cite{Carlen-2017}, with the additional motivation of giving theoretical grounds for the stability bound associated with the Keller-Lieb-Thirring inequality. This inequality estimates the fundamental eigenvalue $\lambda(V)$ of a Schr\"odinger operator $-\Delta + V(x)$ with $\nrm Vt$ for an appropriate~$t$, and can be seen as the Legendre transform of~\eqref{GNSineq} written with $p=2$: see~\cite{MR3177378} for further details. Coming back to Theorem~\ref{HLSCarlen}, an interesting consequence of the method is that $\kappa_{{\mathrm{HLS}},\alpha}$ is explicitly computable in terms of $\kappa_{\mathrm S}$. However, none of the two constants is known nor has been given an explicit estimate for them. Notice that a scheme of a direct proof based on a result of M.~Christ in~\cite{christ2014sharpened} has been proposed in~\cite{MR3429269} by H.~Liu and A.~Zhang in the case of the Heisenberg group.

We refer to~\cite{Fig13} for consequences of stability results. The remainder term in~\eqref{BE} is quadratic and as R.~Frank points out in~\cite{Frank-2021} (see also \cite{Bianchi-Egnell-1991} for a comment on this), the power two is optimal since it is not possible to bound it from below with terms like $\nrm{\nabla f}2^{2-\alpha}\,\nrm{\nabla f-\nabla h}2^\alpha$ for some $\alpha<2$. Similar issues are addressed for instance in~\cite{CFMP09,FMP13,FN18,Neu20,FZ20}. In the same spirit, on the sphere $\mathbb S^n$ with uniform probability measure $d\mu$, R.L.~Frank shows in~\cite[Theorem~2]{Frank-2021} that the optimal value for which there exists some $\kappa>0$ for which
\[\textstyle
\nrmsd{\nabla f}2^2-\frac d{q-2}\(\nrmsd fq^2-\nrmsd f2^2\)\ge\kappa\,\frac{\(\nrmsd{\nabla f}2^2-\frac d{q-2}\(\nrmsd f2^2-\bar f^2\)\)^\frac\alpha2}{\(\nrmsd{\nabla f}2^2+\frac d{q-2}\,\nrmsd f2^2\)^\frac{\alpha-2}2}
\]
with $\bar f:=\int_{\mathbb S^n}f\,d\mu$, is $\alpha=4$. Here we use the notation
\[
\nrmsd fq=\(\int_{\mathbb S^n}|f|^q\,d\mu\)^\frac2q\,.
\]
Optimal remainder terms are also found to be quartic on the non-trivial manifold $\mathbb S^1({\scriptstyle 1/\sqrt{n-2}})\times\mathbb S^{n-1}$ as the minimizer is degenerate in the sense that there is a zero mode of the Hessian of the minimisation problem that does not come from symmetries of the set of minimizers. Notice that other results can be obtained for different choices of the distance on the sphere: see, \emph{e.g.},~\cite{DEL17}. 

Summarizing, in quantitative stability results obtained from methods \emph{\`a la} Bianchi and Egnell, the exponent in the distance term is at least partially understood. Essentially nothing is known on the value of the constant $\kappa_{\mathrm S}$ and similar constants in related stability inequalities, or even how to get such \emph{constructive} estimates for strong notions of distances. There are some results for weaker notions of distance and this is what we shall expose in the next sections.

\section{Stability results for \texorpdfstring{\eqref{HLSsp}}{HLS} by parabolic methods}\label{Sec:stab-HLS}

E.~Carlen, J.A.~Carrillo and M.~Loss prove in~\cite{CCL-2010} that
\[
\mathcal C_n\,\|f\|_{L^\frac{2n}{n+2}(\R^n)}^2 - \int_{\R^n}f(x)(-\Delta)^{-1}f(x)\,dx=\frac8{n+2}\int_0^{+\infty}e^{\beta t}\,\mathcal D\big[u^\frac{n-1}{n+2}\,\big(\cdot\,,\,e^{\beta t}\big)\big]\,dt\,,
\]
where $u=u(t,x)$ is the solution of the \emph{fast diffusion} equation
\[
\frac{\partial u}{\partial t}=\Delta\,u^m\quad\mbox{in}\quad \R^n\,,\quad t\ge0\,,
\]
with exponent $m=n/(n+2)$ and initial datum $u(0,x)=f(x)$. The deficit is measured by
\[
\mathcal D[g]:=\mathcal C_n\,\frac{n\,(n-2)}{(n-1)^2}\,\nrm g2^2\,\nrm g{p+1}^{2(p-1)}-\nrm g{2p}^{2p}
\]
for any function $g\in H^1(\R^n)$, where $p=(n+1)/(n-1)$. By~\cite{DD-2002}, it turns out that $\mathcal D[g]\ge0$ is a sharp form of~\eqref{GNSineq} inequalities, with equality if and only if $g(x)=\big(1+|x|^2\big)^{1/(p-1)}$ up to a multiplication by a constant, a translation and a scaling. The proof of the equality case can be achieved either by symmetrisation, variational methods and ODE techniques, or using the \emph{carr\'e du champ} method adapted to the fast diffusion equation (see~\cite{MR3497125} for detailed justifications and references). This result provides us with a proof of the sharp~\eqref{HLSgen} inequality, identifies the optimal function in terms of a Barenblatt profile and provides an integral deficit term. Moreover, the \emph{carr\'e du champ} method allows to bypass symmetrisation techniques.

Another fast diffusion equation, now with exponent $m=(n-2)/(n+2)$, provides us with similar results. If $n\ge3$, a formal computation shows that
\[
\frac d{dt}\(\mathcal C_n\,\|f\|_{L^\frac{2n}{n+2}(\R^n)}^2-\int_{\R^n}f(x)(-\Delta)^{-1}f(x)\,dx\)=2\,\|f\|_{L^\frac{2n}{n+2}(\R^n)}^\frac4{n+2}\,\mathcal D[v]
\]
where $v=u^m$ and, for any $g\in L^\frac{2n}{n-2}(\R^n)$ such that $\nabla g\in L^2(\R^n)$,
\[
\mathcal D[g]:=\|\nabla g\|_{L^2(\R^n)}^2-S_n\,\|g\|_{L^\frac{2n}{n-2}(\R^n)}^2\,.
\]
In the spirit of the \emph{carr\'e du champ} method, it can be shown that the inequality $\mathcal D[g]$ is monotone non-increasing with limit $0$, so that $\mathcal D[g]\ge0$ is in fact the standard Sobolev inequality, with optimal constant $S_n=1/\mathcal C_n$. In practice the solution of the fast diffusion equation with $m=(n-2)/(n+2)$ vanishes for some finite time $T>0$ and justifications require some additional care. For instance, it is convenient to use the inverse stereographic projection and rewrite the evolution equation on the unit sphere: see~\cite{1101,Jin_2014,MR3227280} for details. Altogether, we may conclude that
\[
\mathcal C_n\,\|f\|_{L^\frac{2n}{n+2}(\R^n)}^2 - \int_{\R^n}f(x)(-\Delta)^{-1}f(x)\,dx=2\int_0^T\mathcal D\big[u^m(t,\cdot)\big]\,dt\,.
\]

In both cases associated with $m=n/(n+2)$ and $m=(n-2)/(n+2)$, it is not known how to express the deficit of the inequality in terms of the initial datum of the evolution equation.

\section{Constructive stability results}\label{Sec:constructive-stab-Sobolev}

Few constructive stability results are known so far. In~\cite{FMP13}, A.~Figalli, F.~Maggi and A.~Pratelli use mass transportation and rearrangements to prove constructive stability results for the $1$-Sobolev anisotropic inequalities for functions of bounded variation. The initial idea to prove those results comes from a previous work of N.~Fusco, F.~Maggi and A.~Pratelli, who prove a non-constructive stability result for the isotropic $1$-Sobolev inequality for functions of bounded variation.

As a straightforward consequence of the duality approach, an explicit stability bound for~\eqref{Sobfrac} is easily obtained where the distance to the Aubin-Talenti manifold $\mathscr M_{\mathrm S,\alpha}$ is measured in terms of the deficit in~\eqref{HLSgen} with $\lambda=n-2\alpha$. Let us give some details. We consider the case of~\eqref{HLSgen} with the particular choice $\lambda=n-2\alpha$ and $p=t=2\,n/(n+2\alpha)$ of the parameters. The optimal function for~\eqref{HLSgen} is, up to a multiplication by a constant, a translation and a scaling, given by
\[
g_\star(x):=\big(1+|x|^2\big)^{\frac\lambda2-n}\quad\forall\,x\in\R^d\,.
\]
We know from~\cite{Lieb-1983} and~\cite[Theorem~1.2]{Carlen-2017} that the optimal functions for~\eqref{Sobfrac} is $g_\star^r$ with $r=\frac{n-2\alpha}{n+2\alpha}$, up to multiplications by a constant, translations and scalings. By a standard trick used for instance in~\cite[Theorem~1.2]{1101},~\cite[Theorem~1.4]{Jin_2014} and~\cite[Theorem~1,~(i)]{jankowiak2014fractional}, the expansion of the square in
\[
0\le\int_{\R^n}\left|\nrm fq^{\frac{4\alpha}{n-2\alpha}}\nabla(-\Delta)^{\frac{\alpha-1}2}f-S_{n,\alpha}\,\nabla(-\Delta)^{-\frac{1+\alpha}2}g\right|^2\,dx\,,
\]
with $g^r=f$ so that $\nrm fq^2=\nrm gp^{2r}$ if $q=2\,n/(n-2\alpha)$, shows that
\begin{multline*}\label{eq:mainresult}
S_{n,\alpha}\,\(\nrm gp^2-S_{n,\alpha}\,\nrm{(-\Delta)^{-\alpha/2}g}2^2\)\\
\le \nrm fq^{\frac{8\alpha}{n-2\alpha}}\(\nrm{(-\Delta)^{\alpha/2}f}2^2-S_{n,\alpha}\,\nrm fq^2\)\,.
\end{multline*}
Slightly better results are obtained using flow methods as in~\cite{1101,MR3227280,jankowiak2014fractional} which have not been detailed in Section~\ref{Sec:stab-HLS}. Notice that the expansion of the square applies to logarithmic inequalities corresponding to the limit cases as $\alpha\to n/2$. See~\cite[Theorem~10]{Dolbeault_2022} for further considerations in the case $\alpha=1$ and $n=2$. The main drawback is that, in all these approaches, the stability is controlled only in a weaker norm.

The situation is slightly better for subcritical~\eqref{GNSineq} inequalities. Constructive results are obtained by M.~Bonforte \emph{et al.} in~\cite{BBD+09,BDGV10} in a very small neighborhood of the manifold of the Aubin-Talenti functions and were later improved in~\cite{Dolbeault2011a,DT-2013}. The global result of~\cite{DT-2013} is explicit but sub-optimal as the remainder term is of the order of the square of the entropy while one expects a linear dependence in view of~\cite{BDGV10,Dolbeault2011a}. It is true also in the logarithmic case as explained in~\cite{DT16c} through scaling properties. Notice that stability results in logarithmic Sobolev inequalities  is a widely studied question, with various results in $L^1$ and in Wasserstein distances: see~\cite{Fathi_2016,Feo_2016,die034-0708-437,kim2021instability} and references therein. The proof of~\cite{DT-2013} is based on the use of the remainder terms in the \emph{carr\'e du champ} method and has an already long history in the framework of entropy methods for linear diffusion equations: see~\cite{MR2152502,0528}. A remarkable property of the \emph{carr\'e du champ} method is that it also applies to nonlinear flows of fast diffusion type~\cite{Carrillo2000,Carrillo2003,DT-2013,1501,DEL-JEPE}, eventually with a non-trivial metric (see~\cite{BGL-2014} and references therein), on the Euclidean space or on manifolds: see for instance~\cite{Demange_2008,Dolbeault20141338}. As a consequence, let us give some examples on the sphere, which are taken from~\cite{DEKL,Dolbeault_2020b,DEL17}. Using the uniform probability measure $d\mu$ as in Section~\ref{Sec:nonconstructive-stab-Sobolev}, for any $q\in(1,2)\cup(2,2^*)$ and any $u\in H^1(\mathbb S^d)$,
\[
\nrmsd{\nabla u}2^2-\tfrac d{q-2}\(\nrmsd uq^2-\nrmsd u2^2\)\ge d\,\psi\(\tfrac{\nrmsd uq^2-\nrmsd u2^2}{(q-2)\,\nrmsd uq^2}\)\,\nrmsd uq^2
\]
holds for an explicit, strictly convex $C^2$ function $\psi$ such that $\psi(0)=\psi'(0)=0$ (see~\cite{Dolbeault_2020b} for details). Here equality is achieved only by constant functions, but optimality constants are determined by perturbations involving the spherical harmonics associated with the first eigenvalue of the Laplace-Beltrami operator on $\mathbb S^n$. By requesting additional orthogonality constraints, which discard these spherical harmonics, the constant $d$ in the left-hand side can be improved (with $\psi=0$). See~\cite[Section~5]{DEL17} and~\cite{Dolbeault_2020b}. The corresponding \emph{improved entropy -- entropy production} inequality can be reinterpreted as a stability result. The constant $\kappa$ in front of the distance term is obtained through a classical minimisation problem. The value of the minimum is not explicitly known, except in the limit case of the logarithmic Sobolev inequality as $q\to2$ or if additional symmetry assumptions are imposed.

Recent work by M.~Bonforte \emph{et al.} provides the first quantitative and constructive stability result for~\eqref{GNSineq} and Sobolev inequalities on the Euclidean space, under some constraints. Using entropy methods and fast diffusion flows, the authors prove in~\cite{BDNS} (also see~\cite{BDNS2022} for a scheme of the proof and further considerations on the method) that stability for initial data can be deduced from improved decay rates of relative entropies in rescaled variables when the rescaling is chosen in order to match the solution with the best matching Barenblatt profile among all Barenblatt self-similar solutions: the stability is then measured by a relative Fisher information and takes the form
\[
\inf_{\varphi\in\mathscr M_{\mathrm{GNS}}}\irn{\left|(q-1)\,\nabla u+u^q\,\nabla\varphi^{1-q}\right|^2}\,,
\]
where $\mathscr M_{\mathrm{GNS}}$ is the manifold generated from $\mathsf u_q$ defined in~\eqref{fp} by multiplications by a constant, translations and scalings. The result holds for any of the inequalities~\eqref{GNSineq} with $q=2\,(q-1)\in(1,2^*)$ and also for the Sobolev inequality~\eqref{Sob} when $q=2^*$, $d\ge3$. The method relies on regularisation properties of the fast diffusion flows which introduce an integral decay condition on the initial data. Regularity and properties of the entropy are then used to extend the asymptotic stability results into the \emph{initial time layer}, where nonlinear evolution takes place and the evolution of the entropy is controlled using the nonlinear \emph{carr\'e du champ} method by backward in time estimates. The core of the method is a constructive Harnack inequality based on J.~Moser's methods and a fully quantitative global Harnack Principle for the nonlinear flow. This quantifies the \emph{threshold time} after which the solution is in the \emph{asymptotic regime} and the convergence is governed by an improved Hardy-Poincar\'e inequality, based on a spectral analysis. Improved decay rates are then extended to the initial time layer, thus proving an improved entropy -- entropy production inequality for the solution, which is also valid for the initial data. The improved inequality is rephrased as a stability result while Barenblatt profiles are transformed into the Aubin-Talenti type functions~$\mathsf u_q$. The whole method relies on entropies, which requires to work with functions with a finite second moment, and on a global Harnack Principle, which provides us with a uniform threshold time if and only if the tails of the initial data have sufficient decay. These limitations are the price to pay in order to get a constructive stability estimate with an explicit constant.

Establishing constructive stability results for~\eqref{HLSsp} is so far an open question.

\section{Two local stability results for \texorpdfstring{\eqref{HLSsp}}{HLS}}\label{Sec:local-stab-HLS}

\subsection{A local stability result in the norm of relative uniform convergence}\label{Sec:local-stab-HLS-RUC}

In the spirit of~\cite{BBD+09,MR3227280}, we first establish a \emph{constructive} stability result of~\eqref{HLSsp} in a neighbourhood of the optimal functions, with respect to the very strong topology of relative uniform convergence. See~\cite{BDNS} for an illustration of the interest of this framework. The price to pay is that the result applies in a neighbourhood of~$\mathscr M:=\mathscr M_{{\mathrm{HLS}}_1}$ in a topology stronger than the natural topology.

Equality in~\eqref{HLSsp} is achieved by the function
\[
u_\star(x)=\big(1+|x|^2\big)^{-(n+2)/2}\quad\forall\,x\in\R^d\,.
\]
Let us define the functions
\[
f_0:=u_\star^\frac{n-2}{n+2}=\big(1+|x|^2\big)^{-\frac{n-2}2}\,,\quad f_i :=\frac{x_i}{1+|x|^2}\,f_0\,,\quad f_{n+1}:=\frac{1-|x|^2}{1+|x|^2}\,f_0\,,
\]
and the norm
\be{star-norm}
\|v\|_\star^2:=\irn{u_\star^{-\frac4{n+2}}\,v(x)^2}=\irn{v(x)^2\,\big(1+|x|^2\big)^2}\,.
\ee
For perturbations $u_\star+\varepsilon\,u_\star^\frac4{n+2}\,g$ of $u_\star$ such that
\be{BoundRUC}
\big\|\,u_\star^{-\frac{n-2}{n+2}}\,g\,\big\|_{L^\infty(\R^n)}\le1\,,
\ee
there is a stability result for~\eqref{HLSsp} which is uniform in $\varepsilon>0$ small enough. Notice that~\eqref{BoundRUC} can be rewritten as
\[
(1-\varepsilon)\,u_\star\le u_\varepsilon:=u_\star+\varepsilon\,u_\star^\frac4{n+2}\,g\le(1+\varepsilon)\,u_\star\,.
\]
This means that $u_\varepsilon/u_\star$ is $\varepsilon$-close to $1$, \emph{i.e.}, $u_\varepsilon$ is close to $u_\star$ in the topology of \emph{relative uniform convergence} (see~\cite{BBD+09,BDNS}) associated with the norm $v\mapsto\nrm{v/u_\star}\infty$.
\begin{thm}\label{Thm:RUC} Let $n\ge3$. If $g$ fullfils~\eqref{BoundRUC} and the orthogonality conditions
\be{orthogg}
\irn{\frac{g\,f_i}{\big(1+|x|^2\big)^2}}=0\quad\forall\,i=0,1,\dots,n+1\,,
\ee
then for any $\varepsilon\in(0,1)$, the function $u_\varepsilon=u_\star+\varepsilon\,u_\star^\frac4{n+2}\,g$ satisfies
\[
\mathcal C_n\,\nrm u{\frac{2\,n}{n+2}}^2-\irn{u\,(-\Delta)^{-1}u}\ge\kappa_n\,\|u-u_\star\|_\star^2
\]
with $\kappa_n:=\frac{8\,(n+1)}{3\,n\,(n+2)^2\,(n+4)}$.
\end{thm}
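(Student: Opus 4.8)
The plan is to expand the deficit to second order about $u_\star$, to identify the resulting quadratic form whose spectral gap is computed on the sphere, and to absorb the higher-order remainder by means of the pointwise bound~\eqref{BoundRUC}. Throughout set $p:=\tfrac{2n}{n+2}$, write $w:=u_\star^{4/(n+2)}g$ so that $u_\varepsilon=u_\star+\varepsilon\,w$, and put $s:=u_\star^{-1}w=u_\star^{-(n-2)/(n+2)}g$; then~\eqref{BoundRUC} reads $\|s\|_{L^\infty(\R^n)}\le1$ (so that $1+\varepsilon s\in(0,2)$ as $\varepsilon<1$) and $\|u_\varepsilon-u_\star\|_\star^2=\varepsilon^2\irn{u_\star^{p-2}w^2}=\varepsilon^2\irn{u_\star^p s^2}=:\varepsilon^2 m$. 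First I would record the Euler--Lagrange relation $-\Delta f_0=n(n-2)\,u_\star$ for the Aubin--Talenti profile, where $f_0=u_\star^{(n-2)/(n+2)}=u_\star^{p-1}$, equivalently $(-\Delta)^{-1}u_\star=\tfrac1{n(n-2)}u_\star^{p-1}$, together with the optimality identity $\mathcal C_n\nrm{u_\star}p^2=\irn{u_\star(-\Delta)^{-1}u_\star}$. I would also note that~\eqref{orthogg} is equivalent to $\irn{w f_i}=0$ for $i=0,\dots,n+1$; the case $i=0$ gives $\irn{u_\star^p s}=0$, which annihilates every linear term below.

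Next I would split the deficit. As $u\mapsto\irn{u(-\Delta)^{-1}u}$ is exactly quadratic, the Euler--Lagrange relation together with $\irn{w f_0}=0$ yields $\irn{u_\varepsilon(-\Delta)^{-1}u_\varepsilon}=\irn{u_\star(-\Delta)^{-1}u_\star}+\varepsilon^2\irn{w(-\Delta)^{-1}w}$, with no linear contribution. For the norm term I would write $\nrm{u_\varepsilon}p^2=N^{2/p}$ with $N=\irn{u_\star^p(1+\varepsilon s)^p}$, expand $(1+\varepsilon s)^p=1+p\,\varepsilon s+\tfrac{p(p-1)}2\varepsilon^2 s^2+\rho(\varepsilon s)$, and use $\irn{u_\star^p s}=0$ to obtain $N=\irn{u_\star^p}+\tfrac{p(p-1)}2\varepsilon^2 m+\irn{u_\star^p\rho}$. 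Since $2/p>1$, the convexity bound $x^{2/p}\ge1+\tfrac2p(x-1)$ evaluated at $x=N/\irn{u_\star^p}$ produces a clean lower bound for $\mathcal C_n\nrm{u_\varepsilon}p^2$, and combining the two pieces gives
\[
\mathcal C_n\nrm{u_\varepsilon}p^2-\irn{u_\varepsilon(-\Delta)^{-1}u_\varepsilon}\ge\tfrac{p-1}{n(n-2)}\,\varepsilon^2 m-\varepsilon^2\irn{w(-\Delta)^{-1}w}+\tfrac{2}{p\,n(n-2)}\irn{u_\star^p\rho}.
\]

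The main obstacle is the spectral term $\irn{w(-\Delta)^{-1}w}$, which I would control by passing to $\mathbb S^n$. Setting $\phi:=u_\star^{(p-2)/2}w$ one has $\irn{u_\star^{p-2}w^2}=\irn{\phi^2}$ and $w=(1+|x|^2)^{-1}\phi$, whence $\irn{w(-\Delta)^{-1}w}=\langle\phi,T\phi\rangle$ with $T:=(1+|x|^2)^{-1}(-\Delta)^{-1}(1+|x|^2)^{-1}$. The weights $(1+|x|^2)^{-1}$ are exactly the conformal factors of the stereographic projection, so the unitary identification $L^2(\R^n)\simeq L^2(\mathbb S^n)$ conjugates $T$ to a constant multiple of the inverse of the conformal (intertwining) Laplacian, and the normalisation fixed by the $\ell=0$ eigenfunction $u_\star$ gives eigenvalues $\nu_\ell=\big[(2\ell+n)(2\ell+n-2)\big]^{-1}$ on spherical harmonics of degree~$\ell$. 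The decisive point is that the transported conditions $\irn{w f_i}=0$ say precisely that $\phi$ is $L^2(\mathbb S^n)$-orthogonal to all harmonics of degree $0$ and $1$ --- these are exactly the $n+2$ directions $f_0,\dots,f_{n+1}$ --- so the operative eigenvalue is $\nu_2=\tfrac1{(n+2)(n+4)}$ and $\irn{w(-\Delta)^{-1}w}\le\nu_2\,m$. I expect the delicate part to be justifying this conjugation rigorously (self-adjointness, domains, and the validity of the eigenvalue bound on the constrained subspace). With $\nu_2$ inserted, the quadratic part becomes $\big(\tfrac{p-1}{n(n-2)}-\nu_2\big)\varepsilon^2 m=\tfrac{4}{n(n+2)(n+4)}\varepsilon^2 m$.

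Finally I would estimate the remainder $\irn{u_\star^p\rho}$. The fourth derivative of $t\mapsto(1+t)^p$ is positive on $(-1,\infty)$, so Taylor's theorem gives the pointwise bound $\rho(t)\ge\tfrac{p(p-1)(p-2)}6 t^3$ there; integrating against $u_\star^p$ and using $\varepsilon^3\le\varepsilon^2$ and $|s|^3\le s^2$ (from $\|s\|_{L^\infty(\R^n)}\le1$) yields $\irn{u_\star^p\rho}\ge-\tfrac{p(p-1)(2-p)}6\,\varepsilon^2 m$. Substituting this bound and the values $p-1=\tfrac{n-2}{n+2}$, $2-p=\tfrac4{n+2}$ into the displayed inequality, the two explicit constants combine into
\[
\mathcal C_n\nrm{u_\varepsilon}p^2-\irn{u_\varepsilon(-\Delta)^{-1}u_\varepsilon}\ge\Big(\tfrac{4}{n(n+2)(n+4)}-\tfrac{4}{3\,n(n+2)^2}\Big)\varepsilon^2 m=\kappa_n\,\|u_\varepsilon-u_\star\|_\star^2,
\]
which is exactly the claim. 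Observe that the remainder bound is saturated at $t=\pm1$, which is precisely why the passage from the quadratic constant $\tfrac4{n(n+2)(n+4)}$ to $\kappa_n$ loses nothing more than is forced by~\eqref{BoundRUC}.
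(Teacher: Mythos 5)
Your proposal is correct and follows essentially the same route as the paper's proof: kill the linear terms with the $i=0$ orthogonality condition and the Euler--Lagrange identity, lower-bound $\nrm{u_\varepsilon}p^2$ via the convexity of $x\mapsto x^{2/p}$, control the quadratic term by the spectral gap $\mu_2=(n+2)(n+4)$ of the conformal Laplacian on the subspace orthogonal to the degree-$0$ and degree-$1$ spherical harmonics, and absorb the cubic remainder using~\eqref{BoundRUC} and $\varepsilon<1$, arriving at the same constant $\kappa_n$. The only (cosmetic) differences are that you bound the Taylor remainder via positivity of the fourth derivative of $t\mapsto(1+t)^p$ instead of the Lagrange form with a sign case-distinction on $u-u_\star$, and you extract $\mathcal C_n\,\nrm{u_\star}p^{2-p}=\tfrac1{n(n-2)}$ from the optimality identity rather than from the explicit Gamma-function values of $\nrm{u_\star}p^p$ and $\mathcal C_n$.
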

\begin{proof} Let $p=2\,n/(n+2)$. By using the fact that $u_\star$ is optimal, inequality~\eqref{HLSsp} can be written as $\mathscr H[u]\ge0$ with
\begin{multline*}
\mathscr H[u]:=\,\mathcal C_n\(\nrm up^2-\nrm{u_\star}p^2-2\,\nrm{u_\star}p^\frac4{n+2}\irn{u_\star^\frac{n-2}{n+2}\,(u-u_\star)}\)\\
-\,\irn{(u-u_\star)\,(-\Delta)^{-1}(u-u_\star)}\,.
\end{multline*}
Assume that $g$ satisfies~\eqref{BoundRUC} and~\eqref{orthogg}. We notice that
\[
\irn{u_\star^\frac{n-2}{n+2}\,(u-u_\star)}=\irn{f_0\,(u-u_\star)}=0\,,
\]
and as a consequence
\[
\mathscr H[u]=\mathcal C_n\(\nrm up^2-\nrm{u_\star}p^2\)-\irn{(u-u_\star)\,(-\Delta)^{-1}(u-u_\star)}\,.
\]
By a Taylor-Lagrange expansion, we have
\begin{multline*}
|u|^p-u_\star^p-p\,u_\star^{p-1}\,(u-u_\star)-\tfrac12\,p\,(p-1)\,u_\star^{p-2}\,(u-u_\star)^2\\
=\tfrac16\,p\,(p-1)\,(p-2)\,|\xi|^{p-4}\,\xi\,(u-u_\star)^3
\end{multline*}
for some intermediate value $\xi$ between $u_\star$ and $u$. With $p=2\,n/(n+2)\in(1,2)$, $u_\star>0$ and $|u-u_\star|<\varepsilon\,u_\star$ for some $\varepsilon\in(0,1)$, we know that $\xi$ is positive. In that case, either $u<u_\star$ and the right-hand side is positive, or $u\ge u_\star$ and
\[
|\xi|^{p-4}\,\xi\,(u-u_\star)^3\le u_\star^{p-3}\,(u-u_\star)^3\,.
\]
By using $|g|\le u_\star^\frac{n-2}{n+2}$ by~\eqref{BoundRUC}, the expansion applied with $u=u_\varepsilon$ shows that
\[
\tfrac16\,p\,(p-1)\,(p-2)\,|\xi|^{p-4}\,\xi\,(u-u_\star)^3\ge-\frac{4\,n\,(n-2)}{3\,(n+2)^3}\,\varepsilon^3\,u_\star^\frac4{n+2}\,g^2\,.
\]
After taking into account~\eqref{orthogg} and $u_\star^\frac4{n+2}(x)=\big(1+|x|^2\big)^{-2}$, we obtain
\[
\irn{|u_\varepsilon|^p}\ge\irn{u_\star^p}+\frac{n\,(n-2)}{(n+2)^2}\,\varepsilon^2\(1-\frac{4\,\varepsilon}{3\,(n+2)}\)\irn{u_\star^\frac4{n+2}\,g^2}\,.
\]
By a Taylor-Lagrange expansion again, we know that $s^q\ge s_\star^q+q\,{s_\star}^{q-1}\,(s-s_\star)$ if $q>1$ and $s>s_\star>0$. Applied with $q=1+2/n$, $s=\irn{|u_\varepsilon|^p}$ with $\varepsilon\in(0,1)$ so that $\frac{4\,\varepsilon}{3\,(n+2)}<1$, and $s_\star=\irn{u_\star^p}$, we infer that
\[
\nrm{u_\varepsilon}p^2-\nrm{u_\star}p^2\ge\(1-\frac{4\,\varepsilon}{3\,(n+2)}\)\,\varepsilon^2\,\mathcal M[g]
\]
where
\[
\mathcal M[g]:=\frac{n-2}{n+2}\,\nrm{u_\star}p^\frac4{n+2}\irn{\frac{g(x)^2}{\big(1+|x|^2\big)^2}}\,.
\]
With
\[
\mathcal D[g]:=\irn{\frac{g(x)}{\big(1+|x|^2\big)^2}\,(-\Delta)^{-1} \frac{g(x)}{\big(1+|x|^2\big)^2}}\,,
\]
we know that
\[
\mathscr H[u_\varepsilon]\ge\mathcal C_n\,\mathcal M[g]\,\varepsilon^2\(1-\frac{4\,\varepsilon}{3\,(n+2)}-\frac{\mathcal D[g]}{\mathcal C_n\,\mathcal M[g]}\).
\]
Finally, we study the quotient $\mathcal D[g]/\mathcal M[g]$ among functions which satisfy the orthogonality conditions~\eqref{orthogg}. To this end, let us consider the orthonormal basis of functions of $L^2\big(\R^n,(1+|x|^2)^{-2}dx\big)$ made of the spherical harmonics on $\mathbb S^n$ mapped into $\R^n$ via the stereographic projection. They satisfy the equations
\[
-\Delta g_k=\mu_k\,\frac{g_k}{\big(1+|x|^2\big)^2}\,,\quad \mu_k=4\,k(k+n-1)+ n\,(n-2)
\]
and as a consequence, we have
\[
\mathcal D[g_k]=\frac1{\mu_k}\,\irn{\frac{|g_k(x)|^2}{\big(1+|x|^2\big)^2}}\,.
\]
Under condition~\eqref{orthogg}, $g=\sum_{k\ge2}a_k\,g_k$ is such that
\[
\irn{\frac{g(x)^2}{\big(1+|x|^2\big)^2}}=\sum_{k\ge2}|a_k|^2\quad\mbox{and}\quad \mathcal D[g]=\sum_{k\ge2}\frac{|a_k|^2}{\mu_k}\,.
\]
Since $(\mu_k)_{k\in\N}$ is an increasing sequence, it follows that 
\be{D}
\frac{\mathcal D[g]}{\irn{\frac{g(x)^2}{\big(1+|x|^2\big)^2}}}\le\frac1{\mu_2}=\frac1{(n+2)\,(n+4)}
\ee
Taking into account the fact that
\[
\nrm{u_\star}p^p=\frac{2^{1-n}\,\pi^\frac{n+1}2}{\Gamma(\frac{n+1}2)}
\]
and the expression of~$\mathcal C_n$ in~\eqref{HLSsp}, we obtain
\[
\mathscr H[u_\varepsilon]\ge\frac{4\,\varepsilon^2}{n\,(n+2)}\(\frac1{n+4}-\frac\varepsilon{3\,(n+2)}\)\ge\kappa_n\,\varepsilon^2\irn{\frac{g(x)^2}{\big(1+|x|^2\big)^2}}\,,
\]
because $\varepsilon<1$. This completes the proof of Theorem~\ref{Thm:RUC}.
\end{proof}

A stability result similar to Theorem~\ref{Thm:RUC} can be established for~\eqref{HLSgen} with the particular choice $\lambda=n-2\alpha$ and $p=t=2\,n/(n-2\alpha)$ of the parameters, $\alpha\in(0,n/2)$, by adapting the computations of~\cite[Section~4]{jankowiak2014fractional}.

\medskip Let us consider on $H^{-1}(\R^n)$ the norm $\nnrm f=\nrm{\nabla(-\Delta)^{-1}f}2$ such that
\[
\nnrm f^2=\irn{f\,(-\Delta)^{-1}f}=\pi^{2-\frac n2}\,\Gamma(n-2)\iint_{\R^n\times\R^n}\frac{f(x)\,f(y)}{|x-y|^{n-2}}\,dx\,dy\,.
\]
Here we consider again the case $\lambda=n-2$ of~\eqref{HLSgen}, with $p=t=2\,n/(n+2)$, and recall that $L^p(\R^n)\subset H^{-1}(\R^n)$. For any $f\in L^p(\R^n)$ and any $h\in\mathscr M:=\mathscr M_{{\mathrm{HLS}},1}$, let us define the quotient
\[
\mathscr Q[f,h]:=\frac{\nrm fp^2-S_n\,\nrm{(-\Delta)^{-1/2}f}2^2}{\nnrm{f-h}^2}\,.
\]
Our goal is to prove that for some $\kappa>0$
\be{LocalStab}
\inf_{h\in\mathscr M}\mathscr Q[f,h]\ge\kappa
\ee
for all functions in a small neighbourhood of $\mathscr M$, to be defined. Theorem~\ref{Thm:RUC} is already a result of this type: if
\[
g=\frac1\varepsilon\,(f-u_\star)\,u_\star^{-\frac4{n+2}}
\]
satisfies~\eqref{BoundRUC} for some $\varepsilon\in(0,1)$, using~\eqref{D}, we obtain a first \emph{local stability result} with $\kappa=(n+2)\,(n+4)\,\kappa_n$. An important drawback is that the topology of the \emph{relative uniform convergence} is very strong so that the eligible functions $f$ is a small set in the natural function space.

\subsection{A local stability result in a weighted norm}

Here we want to work in the more natural framework of the norm defined by~\eqref{star-norm}. We recall that $p=2\,n/(n+2)$. By H\"older's inequality, we can write
\be{Holder2}
\nrm fp^2\le\nrm{u_\star}p^\frac4{n+2}\,\|f\|_\star^2\,,
\ee
which means that $\|f\|_\star^2$ is a stronger norm than $\nrm fp^2$. Equality holds in~\eqref{Holder2} for $f>0$ such that $\nrm fp=\nrm{u_\star}p$ if and only if $f=u_\star$ so that
\[
\nrm fp^2-\nrm{u_\star}p^{\frac4{n+2}}\,\|f\|_\star^2
\]
measures the distance of $f$ to $u_\star$. This can be made precise using $(1+s)^{2/p}\ge1+2\,s/p$ for any $s\ge0$ and, according to the generalised Pinsker-Csisz\'ar-Kullback inequality (see \emph{e.g.},~\cite[Proposition~1.1]{BDIK}) as follows: for any function $f\in L^p(\R^n)$,
\begin{align*}
\(\nrm fp^2-\nrm{u_\star}p^2\)&\ge\frac2p\,\nrm{u_\star}p^{2-p}\(\irn{|f|^p}-\irn{u_\star^p}\)\\
&\ge2^{1-\frac2p}\,(p-1)\,\nrm{|f|-u_\star}p^2\,.
\end{align*}

For simplicity, we shall assume again that $\alpha=1$, so that $\lambda=n-2$ as in Section~\ref{Sec:local-stab-HLS-RUC}. The extension to $\alpha\neq1$ is left to the reader. Since $u_\star$ is a critical point of $\mathscr H[f]$, we can write that
\[
1+\mathscr Q[f,u_\star]=\mathcal C_n\,\frac{\nrm fp^2-\nrm{u_\star}p^2-2\,\nrm{u_\star}p^\frac4{n+2}\irn{u_\star^\frac{n-2}{n+2}\,(f-u_\star)}}{\nnrm{f-u_\star}^2}
\]
\begin{thm}\label{Thm:star} Let $n\ge3$. Let $f\in L^p(\R^n)$ be a nonnegative function such that the orthogonality conditions~\eqref{orthogg} are fullfiled by
\[
g=(f-u_\star)\,u_\star^{-\frac4{n+2}}\,.
\]
If one has
\be{ConditionK}
\frac{n+4}{3\,(n+2)}\,(1+\eta)\irn{u_\star^{p-3}\,|f-u_\star|^3}\le\irn{u_\star^{p-2}\,|f-u_\star|^2}
\ee
for some $\eta>0$, then~\eqref{LocalStab} holds with $\kappa=\frac{4\,\eta}{n\,(1+\eta)}$.
\end{thm}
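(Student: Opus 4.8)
The plan is to start from the identity displayed just above the statement,
\[
1+\mathscr Q[f,u_\star]=\mathcal C_n\,\frac{N[f]}{\nnrm{f-u_\star}^2}\,,\quad N[f]:=\nrm fp^2-\nrm{u_\star}p^2-2\,\nrm{u_\star}p^{\frac4{n+2}}\irn{u_\star^{\frac{n-2}{n+2}}\,(f-u_\star)}\,,
\]
and to bound the convex numerator $N[f]$ from below by a multiple of $\|f-u_\star\|_\star^2$, exactly as in the proof of Theorem~\ref{Thm:RUC} but with the pointwise bound~\eqref{BoundRUC} replaced by the integral hypothesis~\eqref{ConditionK}. Write $w:=f-u_\star$ and $g=w\,u_\star^{-4/(n+2)}$ as in the statement, so that $\|w\|_\star^2=\irn{g^2\,(1+|x|^2)^{-2}}$ and $\nnrm w^2=\mathcal D[g]$. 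First I would expand $\irn{|f|^p}$ around $u_\star$ by Taylor-Lagrange to second order; since $f\ge0$ and $u_\star>0$ the intermediate value is positive, so as in Theorem~\ref{Thm:RUC} the cubic remainder is bounded below by $-\tfrac{4\,n\,(n-2)}{3\,(n+2)^3}\irn{u_\star^{p-3}\,|w|^3}$, while the quadratic term equals $\tfrac{n\,(n-2)}{(n+2)^2}\,\|w\|_\star^2$.

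The orthogonality condition~\eqref{orthogg} for $i=0$ reads $\irn{u_\star^{(n-2)/(n+2)}\,w}=0$, so the linear term in the expansion vanishes and the subtracted term in $N[f]$ disappears. Next I would pass from $\irn{|f|^p}$ to $\nrm fp^2$ by the convexity bound $s^{2/p}\ge s_\star^{2/p}+\tfrac2p\,s_\star^{2/p-1}(s-s_\star)$ (valid since $2/p=1+2/n>1$), with $s=\irn{|f|^p}$, $s_\star=\irn{u_\star^p}$; note that~\eqref{ConditionK} forces $s\ge s_\star$, consistently. Since $\tfrac2p\,s_\star^{2/p-1}=\tfrac{n+2}n\,\nrm{u_\star}p^{4/(n+2)}$, this yields
\[
N[f]\ge\tfrac{n-2}{n+2}\,\nrm{u_\star}p^{\frac4{n+2}}\(\|w\|_\star^2-\tfrac4{3\,(n+2)}\irn{u_\star^{p-3}\,|w|^3}\)\,.
\]
Condition~\eqref{ConditionK} now absorbs the cubic integral, giving $\irn{u_\star^{p-3}\,|w|^3}\le\tfrac{3\,(n+2)}{(n+4)\,(1+\eta)}\,\|w\|_\star^2$, so the parenthesis is at least $\big(1-\tfrac4{(n+4)(1+\eta)}\big)\,\|w\|_\star^2$.

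The final step is spectral. Using the conditions~\eqref{orthogg} for $i=1,\dots,n+1$ to discard the modes $k=0,1$ and invoking~\eqref{D}, I have $\|w\|_\star^2\ge\mu_2\,\nnrm w^2=(n+2)(n+4)\,\nnrm w^2$. Feeding the lower bound for $N[f]$ into the identity and using the arithmetic evaluation $\mathcal C_n\,\nrm{u_\star}p^{4/(n+2)}=\tfrac1{n(n-2)}$ (equivalently $\mathcal C_n\,\tfrac{n-2}{n+2}\,\nrm{u_\star}p^{4/(n+2)}=1/\mu_1$, which follows from the value of $\nrm{u_\star}p^p$ recalled in the proof of Theorem~\ref{Thm:RUC} and the Legendre duplication formula for $\Gamma$), one obtains
\[
1+\mathscr Q[f,u_\star]\ge\tfrac1{\mu_1}\(1-\tfrac4{(n+4)(1+\eta)}\)\tfrac{\|w\|_\star^2}{\nnrm w^2}\ge\tfrac{n+4}n\(1-\tfrac4{(n+4)(1+\eta)}\)=\tfrac{n+4}n-\tfrac4{n(1+\eta)}\,,
\]
whence $\mathscr Q[f,u_\star]\ge\tfrac4n-\tfrac4{n(1+\eta)}=\tfrac{4\,\eta}{n\,(1+\eta)}$. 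Since~\eqref{orthogg} makes $w$ orthogonal to the tangent space of $\mathscr M$ at $u_\star$, the point $u_\star$ is the best-matching element, so this is~\eqref{LocalStab} with $\kappa=\tfrac{4\,\eta}{n\,(1+\eta)}$.

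I expect the only delicate point to be the treatment of the cubic remainder: in Theorem~\ref{Thm:RUC} it is dominated pointwise via~\eqref{BoundRUC}, whereas here the smallness must come solely from the integral hypothesis~\eqref{ConditionK}, which is calibrated so that the cubic correction is the fraction $\tfrac1{1+\eta}$ of the quadratic term. The remaining work is quantitative bookkeeping: one must check that the $L^p$ Hessian coefficient matches the first eigenvalue ($1/\mu_1$), so that the spectral gap ratio $\mu_2/\mu_1=(n+4)/n$ combined with~\eqref{ConditionK} leaves precisely the clean constant $\tfrac{4\,\eta}{n\,(1+\eta)}$ after cancellation.
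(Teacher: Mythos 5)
Your proof is correct and follows essentially the same route as the paper: Taylor--Lagrange expansion of $\irn{|f|^p}$ with the cubic remainder bounded by $u_\star^{p-3}|f-u_\star|^3$, orthogonality killing the linear term, convexity of $s\mapsto s^{2/p}$, the spectral inequality~\eqref{D} applied to $(f-u_\star)(1+|x|^2)^2$, and Condition~\eqref{ConditionK} absorbing the cubic term as the fraction $\frac1{1+\eta}$ of the quadratic one. The only (immaterial) difference is bookkeeping: the paper bounds $\mathscr H[f]$ directly and normalizes by $\varepsilon=\nnrm{f-u_\star}$ before invoking $\mathsf X\ge(n+2)(n+4)$, whereas you phrase the same arithmetic through the ratio $\mu_2/\mu_1=(n+4)/n$ in the identity for $1+\mathscr Q[f,u_\star]$; both land on the same constant $\kappa=\frac{4\,\eta}{n\,(1+\eta)}$.
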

\begin{proof} Let us consider $h$ such that
\[
f-u_\star=\varepsilon\,h\quad\mbox{with}\quad\varepsilon:=\nnrm{f-u_\star}
\]
and define
\[
\mathsf X:=\|h\|_\star^2=\irn{u_\star^{p-2}\,h^2}\quad\mbox{and}\quad\mathsf Y:=\irn{u_\star^{p-3}\,|h|^3}\,.
\]
A Taylor-Lagrange expansion shows that
\[
\irn{|f|^p}\ge\irn{u_\star^p}+\frac{n\,(n-2)}{(n+2)^2}\,\varepsilon^2\,\mathsf X-\frac{4\,n\,(n-2)}{3\,(n+2)^3}\,\varepsilon^3\,\mathsf Y\,.
\]
Using
\[
(1+s)^{2/p}\ge1+\frac{2\,s}p
\]for any $s\ge0$, we obtain
\[
\nrm{f}p^2-\nrm{u_\star}p^2\ge K_0\,\varepsilon^2\(\frac{n\,(n-2)}{(n+2)^2}\mathsf X-\frac{4\,n\,(n-2)}{3\,(n+2)^3}\,\varepsilon\,\mathsf Y\)
\]
with
\[
K_0:=\frac 2p\,\nrm{u_\star}p^{2-p}=\pi\,\frac{n+2}n\(\frac{\Gamma(n/2)}{\Gamma(n)}\)^\frac2n=\frac1{n^2}\,\frac{n+2}{n-2}\,\frac1{\mathcal C_n}\,.
\]
On the other hand, Inequality~\eqref{D} applied to $g=\big(1+|x|^2\big)^2(f-u_\star)$ shows that
\be{lll}
\irn{(f-u_\star)\,(-\Delta)^{-1}(f-u_\star)}\le\frac{\|f-u_\star\|_\star^2}{(n+2)\,(n+4)}=\frac{\varepsilon^2\,\mathsf X}{(n+2)\,(n+4)}\,.
\ee
Hence
\begin{align*}
\mathscr H[f]&=\mathcal C_n\(\nrm{f}p^2-\nrm{u_\star}p^2\)-\irn{(f-u_\star)\,(-\Delta)^{-1}(f-u_\star)}\\
&\ge\frac1{n^2}\,\frac{n+2}{n-2}\,\varepsilon^2\(\frac{n\,(n-2)}{(n+2)^2}\,\mathsf X-\frac{4\,n\,(n-2)}{3\,(n+2)^3}\,\varepsilon\,\mathsf Y\)-\frac{\varepsilon^2\,\mathsf X}{(n+2)\,(n+4)}\\
&\hspace*{12pt}=\frac{4\,\varepsilon^2}{n\,(n+2)}\(\frac{\mathsf X}{n+4}-\frac{\varepsilon\,\mathsf Y}{3\,(n+2)}\)\ge
\frac{4\,\varepsilon^2}{n\,(n+2)\,(n+4)}\,\frac\eta{1+\eta}\,\mathsf X\,.
\end{align*}
By~\eqref{lll}, we have $\mathsf X\ge(n+2)\,(n+4)$, which concludes the proof.
\end{proof}

With the notations of the proof of Theorem~\ref{Thm:star}, Condition~\eqref{ConditionK} can be rewritten as $\mathsf X-\mathscr K\,\mathsf Y\ge0$ with $\mathscr K=\frac{n+4}{3\,(n+2)}\,\varepsilon\,(1+\eta)$. This somewhat unusual condition has two implications:
\begin{itemize}
\item[(1)] $h\in L^p(\R^n)$ is bounded in the stronger norm $\big(\irn{u_\star^{p-3}\,|h|^3}\big)^{1/3}$, which is however a much weaker condition than~\eqref{BoundRUC},
\item[(2)] the function $h$ is limited to an explicit neighbourhood of $0$, in strong norms.
\end{itemize}
A detailed statement goes as follows.
\begin{prop} If $h$ is a function in $L^p(\R^n)$ such that $\irn{u_\star^{p-3}\,|h|^3}<+\infty$, which satisfies
\be{stab1}
\irn{u_\star^{p-2}\,h^2}-\mathscr K\,\irn{u_\star^{p-3}\,|h|^3}\ge0
\ee
for some $\mathscr K>0$, then $h$ also has the following properties:
\begin{subequations}
\begin{align}
&\label{stab2X}
\irn{u_\star^{p-2}\,h^2}\le\(\mathscr K^{-1}\,\nrm hp^\frac p{2-p}\)^{2-p}\,,\\
&\label{stab2Y}
\irn{u_\star^{p-3}\,|h|^3}\le\mathscr K^{-1}\(\mathscr K^{-1}\,\nrm hp^\frac p{2-p}\)^{2-p}\,,\\
&\label{stab3}\nrm hp^\frac p{2-p}\le\(\mathscr K^{-1}\,\nrm{u_\star}p\)^\frac p{2-p}\,.
\end{align}
\end{subequations}
\end{prop}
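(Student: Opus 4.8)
The plan is to reduce all three bounds to the elementary log-convexity of the moments of a single nonnegative function. I would set $\phi:=|h|/u_\star\ge0$ and introduce the finite measure $d\nu:=u_\star^p\,dx$, of total mass $\nu(\R^n)=\irn{u_\star^p}=\nrm{u_\star}p^p$. A direct substitution turns the three quantities in the statement into moments of $\phi$: one checks that $\irn{u_\star^{p-2}h^2}=\int\phi^2\,d\nu$, $\irn{u_\star^{p-3}|h|^3}=\int\phi^3\,d\nu$ and $\nrm hp^p=\int\phi^p\,d\nu$. In this language the hypothesis~\eqref{stab1} reads $\int\phi^3\,d\nu\le\mathscr K^{-1}\int\phi^2\,d\nu$, and the three conclusions become comparisons among the moments of orders $p<2<3$ and the mass (order $0$).

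For~\eqref{stab2X} I would interpolate the order-$2$ moment between the orders $3$ and $p$: writing $\phi^2=\phi^{3\theta}\,\phi^{p(1-\theta)}$ and applying Hölder's inequality with exponents $1/\theta$ and $1/(1-\theta)$ gives $\int\phi^2\,d\nu\le(\int\phi^3\,d\nu)^\theta(\int\phi^p\,d\nu)^{1-\theta}$, where matching the power of $\phi$ forces $3\theta+p(1-\theta)=2$, that is $\theta=(2-p)/(3-p)\in(0,1)$ since $p\in(1,2)$. Feeding $\int\phi^3\,d\nu\le\mathscr K^{-1}\int\phi^2\,d\nu$ into the right-hand side and solving for $\int\phi^2\,d\nu$ (finite by the very interpolation, and the case $h=0$ being trivial) yields $\int\phi^2\,d\nu\le\mathscr K^{-\theta/(1-\theta)}\int\phi^p\,d\nu$. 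Since $\theta/(1-\theta)=2-p$ and $\int\phi^p\,d\nu=\nrm hp^p$, this is exactly~\eqref{stab2X}. Then~\eqref{stab2Y} is immediate: the hypothesis gives $\int\phi^3\,d\nu\le\mathscr K^{-1}\int\phi^2\,d\nu$, into which~\eqref{stab2X} is inserted.

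For~\eqref{stab3} I would run a complementary interpolation, this time of the order-$p$ moment between the orders $0$ and $2$: Hölder with exponents $2/p$ and $2/(2-p)$ gives $\int\phi^p\,d\nu\le(\int\phi^2\,d\nu)^{p/2}\,\nu(\R^n)^{1-p/2}$ with $\nu(\R^n)=\nrm{u_\star}p^p$. I would then close the loop by substituting~\eqref{stab2X}, namely $\int\phi^2\,d\nu\le\mathscr K^{-(2-p)}\nrm hp^p$, into the right-hand side; collecting the powers of $\nrm hp^p=\int\phi^p\,d\nu$ and raising to the power $2/(2-p)$ gives $\nrm hp^p\le\mathscr K^{-p}\,\nrm{u_\star}p^p$, which is~\eqref{stab3} after taking a $p/(2-p)$-th power.

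Once the reformulation is in place the computations are routine, so the only genuine obstacle is organisational: selecting the correct pair of interpolations. The point is that $2$ must be bracketed by $3$ and $p$ in order to exploit the hypothesis, whereas $p$ must be bracketed by $0$ and $2$ in order to bring in the total mass $\nrm{u_\star}p^p$; and crucially~\eqref{stab3} cannot be derived in isolation but must be supplied the already-proved bound~\eqref{stab2X}. The remaining care is purely in the exponent bookkeeping (verifying $\theta/(1-\theta)=2-p$ and that the final arithmetic collapses to $\mathscr K^{-p}$), together with the standard caveat that every quantity manipulated is finite, which follows from $h\in L^p(\R^n)$ and the assumed finiteness of $\int\phi^3\,d\nu$.
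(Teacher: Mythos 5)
Your argument is correct and is essentially the paper's own proof in a light disguise: the moment reformulation with $\phi=|h|/u_\star$ and $d\nu=u_\star^p\,dx$ encodes exactly the same two H\"older interpolations (the order-$2$ moment between orders $3$ and $p$ for~\eqref{stab2X}--\eqref{stab2Y}, and the order-$p$ moment against the total mass $\nrm{u_\star}p^p$ for~\eqref{stab3}), combined with the hypothesis~\eqref{stab1} in the same way. The only cosmetic deviations are that you obtain~\eqref{stab2Y} by chaining the hypothesis with~\eqref{stab2X} rather than by a second rearrangement of the interpolation, and that you flag the finiteness and $h=0$ caveats explicitly.
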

\begin{proof}

With $\mathsf a=\(\irn{|h|^p}\)^\frac1{2-p}$, $\mathsf X:=\irn{u_\star^{p-2}\,h^2}$, $\mathsf Y:=\irn{u_\star^{p-3}\,|h|^3}$ as in the proof of Theorem~\ref{Thm:star}, the inequality
\[
\irn{u_\star^{p-2}\,h^2}\le\(\irn{u_\star^{p-3}\,|h|^3}\)^\frac{2-p}{3-p}\(\irn{|h|^p}\)^\frac1{3-p}=\(\mathsf a\irn{u_\star^{p-3}\,|h|^3}\)^\frac{2-p}{3-p}\,,
\]
can be rephrased as 
\[
\mathsf X\le(\mathsf a\,\mathsf Y)^\frac{2-p}{3-p}\quad\mbox{or}\quad\mathsf Y\ge\frac1{\mathsf a}\,\mathsf X^\frac{3-p}{2-p}\,.
\]
The stability assumption~\eqref{stab1} means that
\begin{align*}
&0\le\mathsf X-\mathscr K\,\mathsf Y\le\mathsf X-\frac{\mathscr K}{\mathsf a}\,\mathsf X^\frac{3-p}{2-p}=\mathsf X\(1-\frac{\mathscr K}{\mathsf a}\,\mathsf X^\frac1{2-p}\)\,,\\
&0\le\mathsf X-\mathscr K\,\mathsf Y\le(\mathsf a\,\mathsf Y)^\frac{2-p}{3-p}-\mathscr K\,\mathsf Y=\mathsf Y^\frac{2-p}{3-p}\(\mathsf a^\frac{2-p}{3-p}-\mathscr K\,\mathsf Y^\frac1{3-p}\)\,,
\end{align*}
which proves $\mathsf X\le\(\mathsf a\,\mathscr K^{-1}\)^{2-p}$ and $\mathsf Y\le\mathscr K^{-1}\(\mathsf a\,\mathscr K^{-1}\)^{2-p}$, \emph{i.e.},~\eqref{stab2X} and~\eqref{stab2Y}.

By H\"older's inequality, we obtain
\[
\mathsf a^{2-p}=\irn{|h|^p}=\irn{\(u_\star^\frac{p-2}2\,|h|\)^p\,u_\star^{p\,\frac{2-p}2}}\le\(\irn{u_\star^{p-2}\,h^2}\)^\frac p2\(\irn{u_\star^p}\)^{1-\frac p2}
\]
with $\mathsf b:=\nrm{u_\star}p$. After taking into account~\eqref{stab2X}, this amounts to $\mathsf a\ge\(\frac{\mathsf b}{\mathscr K}\)^\frac p{2-p}$, which concludes the proof of~\eqref{stab3}.
\end{proof}

\begin{ack} The authors thank R.L.~Frank for pointing~\cite{Rod66} to them. They also thank an anonymous referee for very relevant and precise remarks.\end{ack}
\begin{funding} This research has been partially supported by the projects \emph{EFI}~ANR-17-CE40-0030 (J.D.) and \emph{molQED} (M.J.E.) of the French National Research Agency (ANR).\\
\copyright\,2022 by the authors. This paper may be reproduced, in its entirety, for non-com\-mercial purposes.\end{funding}
\begin{spacing}{0.9}
\bibliographystyle{emss}
\bibliography{Lieb90}
\end{spacing}
\end{document}